\newcommand{\timediff}{\tau}
\newcommand{\hide}[1]{}
\newcommand{\rr}{\mathbb{R}}
\newcommand{\ep}{\epsilon}
\newcommand{\ra}{\rightarrow}
\newcommand{\ceil}[1]{\left\lceil #1 \right\rceil}
\newcommand{\expect}[1]{\mathbb{E}\left[ #1 \right]}
\newtheorem{theorem}{Theorem}
\newtheorem{lemma}[theorem]{Lemma}
\newtheorem{corollary}[theorem]{Corollary}
\newtheorem{definition}{Definition}
\newtheorem{proposition}{Proposition}
\newcommand{\calC}{\mathcal{C}}
\newcommand{\calE}{\mathcal{E}}
\newcommand{\calO}{\mathcal{O}}
\newcommand{\bbb}{\mathbf{b}}
\newcommand{\bbe}{\mathbf{e}}
\newcommand{\bbx}{\mathbf{x}}
\newcommand{\bby}{\mathbf{y}}
\newcommand{\bbA}{\mathbf{A}}
\newcommand{\inner}[2]{\left\langle ~#1 ~,~ #2~\right\rangle}
\newcommand{\trans}{^{\mathsf{T}}}
\newcommand{\tx}{\tilde{x}}
\newcommand{\Ljj}{L_{jj}}
\newcommand{\Ljk}{L_{jk}}
\newcommand{\Lkj}{L_{kj}}
\newcommand{\pt}{x^t}
\newcommand{\qbar}{\overline{q}}
\newcommand{\ptone}{x^{t-1}}
\newcommand{\xc}{x^0}
\newcommand{\Lmax}{L_{\max}}
\newcommand{\Lres}{L_{\mathrm{res}}}
\newcommand{\Lresbar}{L_{\overline{\mathrm{res}}}}
\newcommand{\G}{\Gamma}
\title{Parallel Stochastic Asynchronous Coordinate Descent: Tight Bounds on the Possible Parallelism}
\author{Yun Kuen Cheung
\thanks{Part of the work done while Yun Kuen Cheung held positions at Max Planck Institute for Informatics, Saarland Informatics Campus and at Singapore University of Technology and Design.
He was supported in part by Singapore NRF 2018 Fellowship NRF-NRFF2018-07 and MOE AcRF Tier 2 Grant 2016-T2-1-170.}
\\
		 Royal Holloway University of London
\and
		Richard Cole~~~~~~~~~~Yixin Tao
		\thanks{This work was supported in part by NSF Grants CCF-1909538 and CCF-1527568.}
\\
		Courant Institute, NYU
}
\date{}
\begin{document}\setlength{\parindent}{0.2in}
\maketitle
\begin{abstract}
Several works have shown linear speedup is achieved by an asynchronous parallel implementation of stochastic coordinate descent so long as there is not too much parallelism. More specifically, it is known that if all updates are of similar duration, then linear speedup is possible with up to $\Theta(\Lmax\sqrt n/\Lresbar)$ processors, where $\Lmax$ and $\Lresbar$ are suitable Lipschitz parameters. This paper shows the bound is tight for almost all possible values of these parameters.
%
\end{abstract}

\newpage
\thispagestyle{plain}\setcounter{page}{1}

\section{Introduction}\label{sec::intro}

Very large scale optimization problems have arisen in many areas such as machine learning.
A natural approach for solving these huge problems is to employ parallel and more specifically asynchronous parallel algorithms.
As common wisdom suggests, when a small number of processors are used in these algorithms,
(linear) speedup can be achieved; but when
too many processors are involved and when they are not properly coordinated,
there may be undesirable outcomes.
(Recall that speedup is defined as the ratio of the algorithm's execution time on a single core and its parallel execution time. Linear speedup means the speedup is linear in the number of cores.)\footnote{In optimization, we compare the times of two executions that achieve a given level of accuracy.}
Typically, the bound on the number of processors is implicit and is expressed in terms of the maximum number $q$ of basic iterations,
namely single coordinate updates, that can overlap.
In many scenarios $q$ will be a small multiple of the number of processors or cores.
In many earlier works the notation $\tau$ was used instead of $q$.\footnote{We chose the notation $q$ to emphasize its likely similarity to $p$, the number of processors.}

This paper considers asynchronous implementations of stochastic coordinate descent (SCD) applied to smooth convex functions $f:\rr^n\ra\rr$.
Several recent works~\cite{LWRBS2015,LiuW2015,CCT2018} quantify how large $q$ can be in this context while guaranteeing
linear speedup.\footnote{Many of these results hold for \emph{composite} functions, i.e.,
functions of the form $f(\bbx) = g(\bbx) + \sum_{k=1}^n \Psi_k(x_k)$, where $g:\rr^n\ra\rr$ is a convex function with a continuous gradient,
and each $\Psi_k:\rr\ra\rr$ is a univariate convex function, but may be non-smooth.}
More precisely, they showed: if at any time at most $q\le \widetilde{q}$ updates can overlap, then linear speedup is guaranteed.
The goal in these works was to demonstrate as large a value of $\widetilde{q}$ as possible.
Note that these results provide \emph{lower} bounds on the actual value of $\widetilde{q}$.

The best existing lower bound is $\widetilde{q} = \Omega(\sqrt{n} \Lmax / \Lresbar)$, where $\Lmax$ and $\Lresbar$ are Lipschitz parameters
defined in Section~\ref{sec::notation}.\footnote{Here, we focus on the case where the step-size used in the asychronous SCD algorithm is 1/$\Lmax$;
we will discuss the cases with smaller step-sizes later in the introduction.}
Intuitively, one can view this as a lower bound on the possible parallelism supporting linear speedup; for if there are $p$ processors at hand, and if the durations of the updates vary by at most a factor of $d$,
then $q \le (d+1)(p-1)$, so achieving linear speedup with up to $q$ updates overlapping implies linear speedup occurs with $p=1+q/(d+1)$ processors.

We present the first work concerning the inverse problem: to identify a value $\qbar$,
such that if $q\ge\qbar$, then this can lead to an undesirable outcome.

\smallskip

\begin{center}
\begin{tabular}{ rl }
\textbf{\emph{Main result}:}~&  \emph{The lower bound of $\Omega(\sqrt{n} \Lmax / \Lresbar)$ is asymptotically tight.}  \\
& (An undesirable outcome can occur if $\qbar \ge \bar{c} \cdot \sqrt{n} \Lmax / \Lresbar$\\
& ~for a sufficiently large constant $\bar{c}$.)
\end{tabular}
\end{center}

\smallskip

We will present an adversarial family of functions with the following property: if $q$ exceeds $\Theta(\sqrt{n} \Lmax / \Lresbar)$ significantly,
then there is an asynchronous schedule for which with high probability
very rapid divergence occurs
for a very long time.
The function family uses a dimensionless parameter $\ep = \Theta(\frac{\Lresbar}{\Lmax\sqrt{n}})$, which is approximately the inverse of the possible parallelism.

We use the following function family, $f_\ep:\rr^n \ra \rr$:
\begin{equation}\label{eq:function-stall}
f_\ep(\bbx) = \frac{1 -\ep}{2}  \cdot \sum_{i=1}^n (x_i)^2 ~+~ \frac{\ep}{2} \cdot \left(\sum_{i=1}^n x_i\right)^2,
\end{equation}
for any $\ep$ satisfying $4/n \le \ep < 1$.
As we shall see, $\Lmax = 1$ and $\Lresbar = \Omega(\ep \sqrt n)$ for this function family;
thus the existing lower bound on the parallelism achieving linear speedup is $\Omega(\sqrt{n} \Lmax / \Lresbar) = \Omega(1/\ep)$.
To obtain bounds using arbitrary values of $\Lmax$ one can simply multiply $f_{\ep}$ by $\Lmax$, which also increases $\Lresbar$ by an $\Lmax$ factor.

Next, we discuss more precisely how we achieve this result.
Recall that the performance of a sequential SCD algorithm is expressed in terms of its convergence rate.
On strongly convex functions, it has a linear convergence rate, meaning that each update reduces the expected value of the difference
$f(\bbx) - f^*$ by at least an $(1 - \alpha/n)$ multiplicative factor, for some constant $\alpha > 0$,
where $f^*$ denotes the minimum value of the function. Consequently,
\begin{equation}\label{eq:upper-convergence-sequential}
\expect{f(\bbx^t) - f^*} ~~\le~~ \left( 1 - \frac \alpha n \right)^t \cdot \left(f(\bbx^0)- f^*\right).
\end{equation}
For our proposed function $f_\ep$, which is strongly convex, we will show that for a suitable initial point $\bbx^0$,
for some constant $\alpha' \ge\alpha$,
\begin{equation}\label{eq:lower-convergence-sequential}
\expect{f_\ep(\bbx^t) - f_\ep^*} ~~\ge~~ \left( 1 - \frac {\alpha'} n \right)^t \cdot \left(f_\ep(\bbx^0)- f_\ep^*\right),
\end{equation}
and hence sequential SCD achieves 
no more than a linear convergence rate in general.
For the function $f_\ep$, we will show that $\alpha =\tfrac 13$ and $\alpha' = 2$.

To achieve linear speedup with a parallel algorithm means that the same convergence rate holds,
up to constant factor, i.e., the $\alpha$ might be reduced by a constant factor $c \ge 1$, but no more:
{\begin{equation}
\expect{f(\bbx^t) - f^*} ~~\le~~ \left( 1 - \frac {\alpha}{cn} \right)^t \cdot \left(f(\bbx^0)- f^*\right),
\end{equation}
where $t$ is now the overall number of iterations performed by the various cores.\footnote{Liu et al.~\cite{LWRBS2015} and Liu and Wright~\cite{LiuW2015} call this near-linear speedup, reserving the term linear speedup for the case when $c=1$.}
This means that to guarantee a particular accuracy, the total number of iterations for a parallel execution is no more than a constant multiple of the number of iterations needed on a single core (so long as $\alpha \le n/2$).

Prior work has shown that linear speedup is achieved when $q \le \tilde{c} \sqrt{n} \Lmax / \Lresbar$ for some constant $\tilde{c}>0$.
To achieve our main result, we show that for the 
function family $f_\ep$,
when $q \ge \bar{c} \sqrt{n} \Lmax / \Lresbar$ for some constant $\bar{c} > \tilde{c}$,
as an adversary, it is possible to pick asynchronous schedules such that for all $t\le n^{10}$ (or more generally, for any constant $\hat{c}\ge 1$,
for all $t \le n^{\hat{c}}$),
\[
\expect{f_\ep(\bbx^t) - f_\ep^*} ~\ge~ \Omega(4^{t/q})\cdot (f_\ep(\bbx^0) - f_\ep^*)
\]
for large enough $n$ (in general, when $n =\Omega(c^4)$). This indicates that when $q$ is too large, linear speedup cannot be achieved in worst-case scenarios.

The above upper bound on $q$ holds when the step-size is $1/\Lmax$. One might wonder what would happen if we reduced the step-size to $1/\G$ for some $\G\ge \Lmax$.
Would the permissible parallelism bound increase significantly, thereby improving the overall speedup?
In fact, we show that the upper bound on $q$ increases to at most $\calO(\G\sqrt{n}/\Lres)$, for any $\Lmax \le \G \le \calO(\Lres \sqrt n)$.
Since this increase is by a factor of at most $\calO(\G/\Lmax)$, but the step-size is reduced by a factor of $\Gamma/\Lmax$,
the overall speedup cannot improve by more than a constant factor.
This upper bound is also asymptotically tight, as there were matching lower bounds for these choices of step-sizes~\cite{CCT2018}.

\paragraph{Prior Work and Asynchrony Models}
First, note that the bound on $q$ is ensuring the asynchrony is bounded, and so we call it \emph{$q$-bounded asynchrony}.
Some requirement of this sort is unavoidable, otherwise there could be updates of arbitrarily long duration,
which, when they commit, could undo an arbitrary amount of progress.

In addition to the $q$-bounded asynchrony assumption,
we need to specify how the asynchronous environment affects the read operations.
There are two models concerning how coordinates are read in asynchronous environments,
namely ``consistent'' and ``inconsistent'' reads.
Our upper bound applies to both models. Next, we discuss their differences.

Liu et al.~\cite{LWRBS2015} gave the first bound on the parallel performance of asynchronous SCD on convex functions,
showing linear speedup when $q = O(\sqrt{n}\Lmax/\Lres)$ assuming a consistent read model,
where $\Lres$ is another Lipschitz parameter defined in Section~\ref{sec::notation}.
We note that $\Lres = \Lresbar$ for the function family $f_\ep$ we will be analyzing in this paper.
In fact, as we shall see, $\Lresbar$ is equal to $\Lres$ on all quadratic functions $f$, i.e.,
$f$ is of the form $\bbx\trans \bbA \bbx + \bbb\trans \bbx + \text{constant}$, where $\bbA$ is an $n\times n$ matrix, and $\bbb$ is an $n$-vector.
In the consistent read model, all the coordinate values a processor reads when performing a single update on one coordinate may be out of date,
but they must have been simultaneously current at some moment.
To make this more precise, we view the updates as committing at integer times $t=1,2,\ldots$,
and we write $\bbx^t$ to be the value of $\bbx$ after the update at time $t$.
The consistent reads model requires the vector of $\bbx$ values used by the time $t$ update to be of the form $\bbx^{t-\timediff}$ for some $\timediff \ge 1$.

Consistent reads create a substantial constraint on the asynchrony, and so subsequent works sought to avoid this assumption.
To this end, Liu and Wright~\cite{LiuW2015} proposed the inconsistent reads model.
Allowing inconsistent reads means that the $\tilde{\bbx}$ values used by the  time $t$ update can be any collection of the form $(x_1^{t-\timediff_1},\cdots,x_n^{t-\timediff_n})$,
where the $\timediff_j$'s can be distinct;
the $q$-bounded asynchrony assumption 
implies that $1 \le \timediff_j \le q$ for each $j$.
Liu and Wright showed that linear speedup (including the more general case of composite functions)
can be achieved for $q = O(n^{1/4}\sqrt{\Lmax}/\sqrt{\Lres})$, i.e., the square root of the previous bound.
There remained several constraints on the possible asynchrony, in addition to the $q$-bounded asynchrony,
as pointed out by Mania et al.~\cite{MPPRRJ2017} and subsequently by Sun et al.~\cite{Sun2017}.
The latter works also gave analyses removing some or all of these constraints, but at the cost of reducing the bound on $q$.
Finally, Cheung, Cole and Tao~\cite{CCT2018} gave an analysis achieving linear speedup for $q = O(\sqrt{n}\Lmax/\Lresbar)$,
again for composite functions, with the only constraint being the $q$-bounded asynchrony.


In this paper we show the bounds in~\cite{LWRBS2015} and~\cite{CCT2018} are asymptotically tight for almost all possible values of
$\Lmax$, $\Lres$, and $\Lresbar$ for the function family \eqref{eq:function-stall}.

\section{Notation}
\label{sec::notation}

Let $\bbe_j$ denote the $n$-vector in which the $j$-th entry is 1 and every other entry is 0.

\begin{definition}\label{def:Lipschitz-parameters}
%
For any coordinates $j,k$, the function $f$ is $\Ljk$-Lipschitz-smooth if for any $\bbx\in\rr^n$ and $r\in\rr$,
$|\nabla_k f(\bbx+r\cdot \bbe_j) - \nabla_k f(\bbx)| ~\le~ \Ljk\cdot |r|$;
it is $\Lres$-Lipschitz-smooth if, for all $j$, $\|\nabla f(\bbx+r\cdot \bbe_j) - \nabla f(\bbx)\| ~\le~ \Lres\cdot |r|$.
Finally, $\Lmax ~:=~ \max_{j} \Ljj$ and  $\Lresbar ~:=~ \max_k \left(\sum_{j=1}^n (\Lkj)^2\right)^{1/2}$.
\end{definition}

Observe that for the function $f_\ep$ we are considering,
\begin{equation}\label{eq:gradf}
\nabla_j f_\ep(\bbx) ~=~ (1 - \ep) \cdot x_j + \ep\cdot  \sum_{i=1}^n x_i.
\end{equation}
Consequently, $\Lmax = 1$, and $\Lresbar = \sqrt{1+(n-1)\ep^2} = \Theta(\ep \sqrt n)$, for $\ep = \Omega(1/\sqrt n)$.

\paragraph{The difference between $\Lres$ and $\Lresbar$}
In general, $\Lresbar\ge \Lres$.
$\Lres=\Lresbar$ when the rates of change of the gradient are constant, as for example in quadratic
functions such as $\bbx\trans \bbA \bbx + \bbb\trans \bbx +c$.
We refer the reader to~\cite{CCT2018} for a discussion of why
$\Lresbar$ is needed in general for the analysis in~\cite{CCT2018}.
\hide{
We need $\Lresbar$ because we do not make the Common Value assumption.
We use $\Lresbar$ to bound terms of the
form $\sum_j |\nabla_j f(y^j) - \nabla_j f(x^j)|^2$, where $|y^j_k - x^j_k| \le |\Delta_k|$,
and for all $h,i$, $|y^i_k -y^h_k|, |x^i_k -x^h_k| \le |\Delta_k|$,
whereas in the analyses with the Common Value assumption, the term being bounded is
$\sum_j |\nabla_j f(y) - \nabla_j f(x)|^2$, where $|y_k - x_k| \le |\Delta_k|$;
i.e., our bound is over a sum of gradient differences along the coordinate axes for
pairs of points which are all nearby, whereas the other sum is over gradient differences along the coordinate axes
for the same pair of nearby points.
Finally, if the convex function is $s$-sparse, meaning that each term $\nabla_k f(x)$ depends on at most $s$ variables,
then $\Lresbar \le \sqrt s \Lmax$.
}

Next, we define strong convexity.
\begin{definition}\label{def:strong-convexity}
Let $f: \rr^n \ra \rr$ be a convex function.
$f$ is strongly convex with parameter $\mu > 0$, if for all $x,y$,
$f(y) - f(x) \ge \inner{ \nabla f(x)}{y-x} + \frac 12 \mu ||y-x||^2$.
\end{definition}

A simple calculation shows that the parameter $\mu$ for our function $f_\ep$ has value $(1-\epsilon)$; see Lemma~\ref{lem::str-conv-bound-for-f} in~Appendix~\ref{app:missing}.

\paragraph{The update rule}
Recall that in a standard coordinate descent, be it sequential or parallel and synchronous, the update rule, applied to coordinate $j$,
first computes the accurate gradient $\nabla_j f(\bbx^{t-1})$,
and then performs the update given below.

\begin{equation}\label{eq:update-rule}
\pt_j ~\leftarrow~ \ptone_j - \frac{\nabla_j f(\bbx^{t-1})}{\G}
\end{equation}
and for all $k\neq j,~\pt_k \leftarrow \ptone_k$, where $\G\ge \Lmax$ is a parameter controlling the step size.

However, in an asynchronous environment,
an updating processor might retrieve outdated information $\tilde{\bbx}$ instead of $\bbx^{t-1}$,
so the gradient the processor computes will be $\nabla_j f(\tilde{\bbx})$,
instead of the accurate value $\nabla_j f(\bbx^{t-1})$. Hence the update rule is in the asynchronous environment is
\begin{equation}\label{eq:update-rule-simplify}
\pt_j ~\leftarrow~ \ptone_j  - \frac{\nabla_j f(\tilde{\bbx})}{\G}.
\end{equation}

\hide{
\paragraph{Scaling}
Sometimes, rather than have one value of $\G \ge \Lmax$ for all coordinates,
a value $\G_j \ge \Ljj$ is used for the $j$-th coordinate, typically with $\G_j/\Ljj$ being a fixed value for all $j$.
Alternatively and equivalently, one could rescale the coordinates so that each
$\Ljj =1$. The resulting $\Lmax = 1$ also, and we would now be using
a common value of $\G$ for the updates of all the coordinates.
Note that this may affect the values of $\Lres/\Lmax$ and $\Lresbar/\Lmax$.
}

We want to show that for any fixed constants $c_1,c_2 \ge 1$, 
$f_\ep(\bbx^t) - f_\ep^*$
is rapidly growing for $t \le q\cdot n^c_1$ with probability at least $1- 1/n^{c_2}$.

\subsection{The Stochastic Asynchronous Coordinate Descent (\textsf{SACD}) Algorithm}

The coordinate descent process starts at an initial point $\bbx^0 = (\xc_1,\xc_2,\cdots,\xc_n)$.
Multiple processors then iteratively update the coordinate values,
and for our
analysis we assume that at each time, there is exactly one coordinate value being updated, which we can do, as we are choosing the asynchronous schedule.

\begin{algorithm}[H]
\caption{\textsf{SACD} Algorithm for Smooth Functions.}
\label{alg:SACD}
\textbf{Input: }The initial point $\bbx^0 = (\xc_1,\xc_2,\cdots,\xc_n)$.\\
~\\
\mbox{Multiple processors use a shared memory. Each processor iteratively repeats the}
\mbox{following four-step procedure, with no global coordination among them}:
\begin{algorithmic}
\STATE \textbf{Step 1:} Choose a coordinate $j \in \{1,2,\cdots,n\}$ uniformly at random.\label{alg-line-random}
\STATE \textbf{Step 2:} Retrieve coordinate values $\tx$ from the shared memory.\label{alg-line-retrieve}
\STATE \textbf{Step 3:} Compute the gradient $\nabla_j f(\tilde{\bbx})$.\label{alg-line-gradient}
\STATE \textbf{Step 4:} Update coordinate $j$ using rule \eqref{eq:update-rule-simplify} by atomic addition. \label{alg-update}
\end{algorithmic}
\end{algorithm}

\smallskip

\subsection{Asynchrony Assumptions, Basic Set-up and Terminology Used in the Construction}\label{sect:setup}

In our construction, updates are made in phases. In each phase, $q$ updates are made in parallel by $q$ processors.
As in Step 1 of~Algorithm~\ref{alg:SACD}, each processor chooses a coordinate to update.
Since these processors are uncoordinated, they choose coordinates randomly and independently, and thus it is possible that some of them choose the same coordinate within a phase.
Then, for every coordinate, each processor retrieves the value that was up to date at the end of the previous phase.

Note that $\bbx^{q\ell}$ denote the up-to-date values
immediately after the $\ell$-th phase.
The starting point $\bbx^0$ can be viewed as the up-to-date values following
the (non-existent) $0$-th phase.
For each $\ell\ge 0$, in the $(\ell+1)$-st phase,
each of the $q$ processors performs the following sequence of steps:
\begin{itemize}
\item it picks a coordinate $k$ randomly and independently to update;
\item it retrieves $\bbx^{q\ell}$ and then computes $\nabla_k f(\bbx^{q\ell})$;
\item it increases the value of $x_k$ in the main memory by $-\nabla_k f(\bbx^{q\ell})/\Gamma$, using an atomic addition operation.
\end{itemize}

Note that in a single phase a particular coordinate might be chosen two or more times. If coordinate $k$ is chosen $b$ times in the $(\ell+1)$-st phase,
the value of $x_k$ after the $(\ell+1)$-st phase is $x_k^{q\ell} - b\cdot \nabla_k f(\bbx^{q\ell})/\Gamma$.

\section{The Result}
\label{sec::result}

Cheung, Cole and Tao~\cite{CCT2018} showed the following upper bound on the performance
of asynchronous stochastic coordinate descent on strongly convex functions,
where $\bbx^*$ is a minimum point for the convex function.

\begin{theorem}
\label{thm::upper-bound-result}
i. Suppose the asynchronous updating is run for exactly $t$ updates.
Also suppose that  
$q \le \min \left\{\tfrac{\sqrt{n}}{270},\tfrac{\Gamma \sqrt n} {270\Lresbar}\right\}$.
If $f$ is strongly convex with parameter $\mu$, then
\[
\expect{f(\bbx^t) -f(\bbx^*)} \le \left(1 - \frac 1 {3n} \cdot \frac {\mu}{\Gamma}\right)^t \left(f(\bbx^0) - f(\bbx^*)\right).
\]
ii. This result holds for all $q\le \tfrac{\Gamma \sqrt n} {12\Lres}$ if the asynchronous schedule obeys the Strong Common Value assumption.
\end{theorem}

The Strong Common Value assumption, specified in~\cite{CCT2018}, captures a substantial class of asynchronous schedules including the consistent read constraint from~\cite{LWRBS2015},
and all the schedules to which the analysis of Liu and Wright~\cite{LiuW2015} apply,
and more.
The complementary construction in this paper observes the consistent reads condition, which is the most restrictive of these constraints.
Consequently, our
goal is to show that for $q = \Omega(\tfrac{\G\sqrt{n}} {\Lres})$ convergence with linear speed-up is not guaranteed.
In fact, we will show that there is no convergence for most values of $q$ obeying this constraint.
To achieve this it suffices to show there is a single asynchronous schedule observing the
consistent read condition for which there is no convergence.

First, in Appendix~\ref{app:missing}, we will prove the following theorem, which shows
that the sequential stochastic coordinate descent on our function $f_\ep$ when starting at the point $\bbx^0=(-1,+1,-1,+1,\ldots)$
achieves a convergence
rate that is at most a constant factor faster than the convergence rate given in Theorem~\ref{thm::upper-bound-result}.
Recall that $\mu = 1-\ep$ for $f_\ep$.

\begin{proposition}
\label{thm::lower-bound-result}
Let $\bbx^0$ be the point with even-indexed coordinates equal to $+1$ and odd-indexed coordinates equal to $-1$.
Suppose the sequential stochastic coordinate descent is run for $t$ steps
on function $f_\ep$ starting at point $x^0$.
Then
\[
\expect{f_\ep(\bbx^t) - f_\ep(\bbx^*)} \ge \left(1 - \frac 2 {n}\cdot \frac {1 - \ep}{\Gamma}\right)^{t} \left(f_\ep(\bbx^0) - f_\ep(\bbx^*)\right).
\]
\end{proposition}

This result shows that for the function $f_\ep$, the speedup guaranteed by Theorem~\ref{thm::upper-bound-result} is indeed a linear speedup of the performance of the sequential SCD.

Now, we come to our main results.

Our analyses look at phases of $q$ successive updates when the coordinate descent in Algorithm~\ref{alg-update} is applied to the function $f \equiv \Lmax \cdot f_\ep$ in \eqref{eq:function-stall} with $\ep = \sqrt{\frac{\left( \frac{\Lres}{\Lmax} \right)^2-1}{n-1}}$.
The first result states that the expected value of $f(\bbx) - f^*$ grows by at least a factor of 4 from phase to phase.

\begin{theorem}
\label{thm::simplest-lower-bound}
Suppose that $\Lres \ge \Lmax \left(1 + 8/n\right)$ and $q \ge \frac{4\G \sqrt{n}}{\sqrt{\Lres^2 - \Lmax^2}}$. Then there is an asynchronous schedule
for which the coordinate descent diverges
when applied to the function $f$.
Specifically,
for every $t=rq$, with $r\ge 1$ an integer,
\[
\expect{f(\bbx^{t}) - f^*} ~\ge~ 4^{(t/q)-1} \cdot (f(\bbx^0) - f^*).
\]

If $\Lres \ge 2 \Lmax$ (i.e.\ $\ep \ge \sqrt{3/(n-1)}$), the constraint on $q$ becomes $q \ge \tfrac{8\G\sqrt{n}}{\sqrt{3}\Lres}$.
\end{theorem}

When $\Lres \ge 2 \Lmax$, Theorem~\ref{thm::simplest-lower-bound} states that once $q$ exceeds the bound in Theorem~\ref{thm::upper-bound-result}(ii)
by a constant factor, in order to have any possibility of convergence, let alone linear convergence, $\G$ has to increase at the same rate as $q$. Note that the upper bound
on the rate of convergence decreases linearly with $\G$, so this says that increasing $q$, the
number of parallel updates, beyond this bound cannot increase the parallel runtime by
more than a constant factor at best.

However, one might wonder if this divergence in expectation is a low probability event.
Our next result shows that for most values of $q < n$ it is in fact a high probability event.

\begin{theorem}
\label{thm::high-prob-phase-lower-bound}
Let $c_1,c_2\ge 1$ be constants,  let $c=c_1+c_2$, and suppose that
\[ \frac n{e} \ge q \ge
\max\left\{ \tfrac {8\G}{\ep \Lmax}, \tfrac{10 (c+1)}{\ep}\tfrac{\log n}{\log \tfrac{n}{q}}\right\}.
\]
For each $c_1,c_2\ge 1$, with probability at least $1 - 1/n^{c_2}$, there is an asynchronous schedule for which the coordinate descent diverges for at least $q\cdot n^{c_1}$ updates
when applied to the function $f$.
Specifically, for every $t=rq$
with $1\le r\le n^{c_1}$ an integer,
\[
f(\bbx^{t}) - f^* ~\ge~ 4^{(t/q)-1} \cdot (f(\bbx^0) - f^*).
\]

If $\Lres\ge 2\Lmax$, the conditions
$\tfrac ne \ge q \ge  \max \left\{\tfrac {16\G \sqrt{n}}{\sqrt 3 \Lres}, \frac{200(c+1)\Lmax \sqrt n}{\sqrt 3 \Lres} \right\}$
and
$n\ge \left( \tfrac{544(c+1)}{\sqrt 3}\cdot \tfrac{\Lmax}{\Lres} \right)^{5/2}$
suffice.
\end{theorem}

Thus the comments in the paragraph after Theorem~\ref{thm::simplest-lower-bound} apply here too, so long as the stated bounds on $n$ hold.

Finally, one might wonder what happens to the value $f(\bbx) - f^*$ during a phase.
Could it be small at any time? As it happens, with our schedule the value oscillates a lot,
but the next and final result shows that with high probability it remains large at all times.

\begin{theorem}
\label{thm::all-times-lower-bound}
Let $c_1,c_2\ge 1$ be constants,  let $c=c_1+c_2$, and suppose that
$\Lres \ge 2\Lmax$, $n \ge 400c^2 (c+2)^2$, and
\[
\frac ne \ge q \ge \max \left\{ \frac{8\G\sqrt n}{\sqrt 3 \Lres} + \frac{4c\G}{\Lmax} ~,~ 20c(c+2) \right\}.
\]
For each $c_1,c_2\ge 1$, with probability at least $1 - 1/n^{c_2}$, there is an asynchronous schedule for which the coordinate descent diverges for at least $q\cdot n^{c_1}$ updates
when applied to the function $f$.
Specifically, for every $1\let\le q \cdot n^{c_1}$,
\[
f(\bbx^{t}) - f^* ~\ge~ \frac{1}{n^2}\cdot 4^{\ceil{(t/q)}-1} \cdot (f(\bbx^0) - f^*).
\]
\end{theorem}

Once more, the comments in the paragraph after Theorem~\ref{thm::simplest-lower-bound} apply.

The three theorems incorporate general choices of $\G$, $c_1$ and $c_2$. In~Theorem~\ref{thm::high-prob-phase-lower-bound} and Theorem~\ref{thm::all-times-lower-bound},
if we pick $c_1,c_2$ to be large enough constants (for example, both are $10$ ---
which corresponds to a $1/n^{10}$ failure probability over the course of $q\cdot n^{10}$ updates),
the conditions on $n$ and $q$ reduce to $n\ge \Theta(1)$ and $n/e \ge q\ge \Theta(\G\sqrt n / \Lres)$, respectively.

\section{Analysis}

We consider running SACD with $q$ processors on the function
\[
f(\bbx) = \Lmax \cdot \left[\frac{1 -\ep}{2}  \cdot \sum_{i=1}^n (x_i)^2 ~+~ \frac{\ep}{2} \cdot \left(\sum_{i=1}^n x_i\right)^2\right],
\]
for any $\ep$ satisfying $4/n \le \ep < 1$. 
We prove each theorem in turn.

We choose the initial point to be the all ones point, $\bbx^0 = (+1,+1,\ldots,+1)$.
For $\ell\ge 0$, let $\bby^\ell$ denote the up-to-date values of the coordinates immediately after the $\ell$-th phase, i.e.~$\bby^\ell = \bbx^{q \ell}$.
Also, let
\[
G^\ell \triangleq \Big|\sum_{k=1}^n y^\ell_k \Big|\qquad\qquad\text{and}\qquad\qquad M^\ell \triangleq \max_{k=1,2,\cdots,n} |y^\ell_k|.
\]

\subsection{Proof of Theorem~\ref{thm::simplest-lower-bound}}
The key claim, implying exponential growth in the value of $f(\bbx)$ at the end
of each successive phase of $q$ updates, is given by the following lemma.

\begin{lemma}
\label{lem::expected-G-bound}
If $\ep \ge \frac 4n$ and $q \ge \frac {4\G}{\ep \Lmax}$ then for all $\ell$,
$\expect{G^\ell} \ge 2^\ell G^0$.
\end{lemma}
\begin{proof}
The proof is by induction. The claim clearly holds for $\ell=0$.

Now suppose the result holds for some $\ell\ge 0$. Without loss of generality, we assume $\sum_{j=1}^n y^\ell_j > 0$; the other case will be symmetric.
In the $(\ell+1)$-st phase, there are $q$ updates. Each update picks a coordinate $k$; by~\eqref{eq:gradf}, the update reduces its value by
\begin{align*}
\frac{\Lmax}{\Gamma} \cdot \bigg[ (1-\ep) y_k^\ell + \ep \sum_{j=1}^n y^\ell_j \bigg] &~=~ \frac{\Lmax}{\Gamma} \cdot \left[ (1-\ep) y_k^\ell + \ep G^\ell \right].
\end{align*}

The expected reduction due to $q$ updates is at least
\begin{align*}
\frac{q\Lmax}{\Gamma} \cdot \left[\ep G^{\ell} -\frac 1n(1-\ep)G^\ell \right]
& \ge \frac{3q\ep\Lmax}{4\Gamma}\cdot G^\ell~~~\text{(as $n\ep \ge 4$)}\\
& \ge 3 G^\ell~~~~~~~~~~~~~~\text{(as $q\ge \tfrac{4\G}{ \ep \Lmax}$).}
\end{align*}
Thus, $\expect{G^{\ell+1}|G^{\ell}} \ge \left|\expect{\sum_{k=1}^n y_k^{\ell+1} | G^{\ell}}\right|
\ge (3-1) G^\ell = 2G^\ell$;
therefore $\expect{G^{\ell+1}} \ge 2\cdot \expect{G^\ell}$, demonstrating the inductive claim.
\end{proof}

\begin{proof} [Proof of Theorem~\ref{thm::simplest-lower-bound}]
We begin by showing that $f(\bby^0) -f^* \le  \ep\cdot(G^0)^2$,
assuming that $\ep \ge 4/n$, which we justify in the next paragraph.
To see this, note that $G^0 =n$ and $f(\bby^0) -f^*= \frac 12(1 - \ep)n + \frac 12\ep n^2$.
As $\ep \ge 4/n$, we see that $f(\bby^0) -f^* \le \frac 18 \cdot \ep n^2+ \frac 12\ep n^2 \le \ep (G^0)^2$.
Next, immediately after Phase $\ell$, $f(\bby^\ell) -f^* \ge  \frac{\ep}{2}\cdot(G^\ell)^2$.
Then, by the Cauchy-Schwarz inequality and~Lemma~\ref{lem::expected-G-bound},
$\expect{f(\bby^\ell) -f^*} \ge \expect{\frac{\ep}{2}\cdot(G^\ell)^2} \ge \frac{\ep}{2}\cdot \expect{G^\ell}^2 \ge 4^{\ell-1}\ep \cdot (G^0)^2 \ge 4^{\ell-1} (f(\bby^0) -f^*)$.

It remains to show that $\ep \ge 4/n$.
Recall that
$\ep^2\Lmax^2 = (\Lres^2 - \Lmax^2)/(n-1)$.
Together, these imply $(\Lres^2 - \Lmax^2) \ge 16(n-1)/n^2\cdot \Lmax^2$.
It suffices to have $\Lres \ge \Lmax (1 +8/n)$, proving the theorem.
\end{proof}

\subsection{Proof of Theorem~\ref{thm::high-prob-phase-lower-bound}}

The idea of the construction is to show that if we can bound $M^\ell$ by $\tfrac 14\ep G^\ell$,
then we can show the bound $G^\ell \ge 2^\ell G_0$ holds absolutely and not just in expectation. We will show that $M^\ell \le \tfrac14 \ep G^\ell$ with high probability.

Our construction uses a parameter $b$ which we will specify later.
We will need that in each
phase, each coordinate is chosen at most $b$ times.
In~Lemma~\ref{lem:not-too-many} and Corollary~\ref{cor:not-too-many} below, we show that this occurs with high probability when $b$ is suitably large.

\begin{lemma}\label{lem:not-too-many}
In one phase, the probability that each coordinate is chosen at most $b$ times by the $q$ processors is at least $1-n(eq/(b+1))^{b+1}/n^{b+1}$,
where $e = 2.71828\ldots$.
\end{lemma}

\begin{proof}
Let $k$ be one of the coordinates, and let $\calE(k)$ denote the event that coordinate $k$ is chosen more than $b$ times. Note that
\[
\calE(k) ~=~ \bigcup_{\substack{S:S\subset\{1,2,\cdots,q\}\\|S|=b+1}} \{\text{coordinate~}k\text{~is chosen by all the processors with labels in~}S\}.
\]
Thus, by the union bound, the probability that $\calE(k)$ holds is at most $\binom{q}{b+1} \cdot \left( \frac 1n \right)^{b+1}$,
which is at most $(eq/(b+1))^{b+1}/n^{b+1}$ by a well-known formula for bounding binomial coefficients.

By the union bound again, the probability that $\cup_{k=1}^n \calE(k)$ holds is at most $n(eq/(b+1))^{b+1}/n^{b+1}$. The lemma follows.
\end{proof}

\begin{corollary}\label{cor:not-too-many} \emph{[Events ${\cal E}_1$ and ${\cal E}_1'$]}
Suppose that $q \le \tfrac ne$,
$c_1, c_2\ge 1$, and $b \ge e -1$;
also, let $\Lambda = \tfrac{\log n}{\log (n/q)}$.
Then the probability that in each of the first $n^{c_1}$ phases each coordinate is chosen at most $b$ times by the $q$ processors
is at least $1-n^{c_1+1-(b+1)/\Lambda}$.
Furthermore,
\\
a. \emph{[Event ${\cal E}_1$]}
if $b \geq  (c_1 + c_2 + 1)/\Lambda$,
the probability is at least $1-1/n^{c_2}$; and
\\
b. \emph{[Event ${\cal E}_1'$]} if $n\ge 2$ and
$b \geq  (c_1 + c_2 + 2)/\Lambda$,
the probability is at least $1-1/n^{c_2+1} \ge 1 - 1/2n^{c_2}$.
\end{corollary}

Next, we show the inductive bounds on $M^\ell$ and $G^\ell$.
Recall that $c = c_1+c_2$.

\begin{lemma}
\label{lem::bounds-on-G}
For any fixed $c_1,c_2\ge 1$, conditioned on $\calE_1$,
if $\ep \ge \tfrac 4n$ and
$\tfrac{n}{e} \ge q \ge \max\left\{ \tfrac {8\G}{\ep \Lmax}, \tfrac{10 (c+1)}{\ep} \cdot \tfrac{\log n}{\log \tfrac{n}{q}} \right\}$,
then
for $\ell = 0,1,2,\ldots,n^{c_1}$, $G^\ell \ge 2^\ell G^0$ and
$M^\ell \le \ep G^\ell/4$.
\end{lemma}
\begin{proof}
Suppose the lemma holds for some $\ell\ge 0$, and without loss of generality, assume that $\sum_{j=1}^n y^\ell_j$ is positive; the other case will be symmetric.
As in Lemma~\ref{lem::expected-G-bound}, consider an update in the $(\ell+1)$-st phase that picks coordinate $k$; the update reduces its value by
\begin{align*}
\frac{\Lmax}{\Gamma} \cdot \left[ (1-\ep) y_k^\ell + \ep G^\ell \right]
~\ge~ \frac{\Lmax}{\Gamma} \cdot \left[ -(1-\ep) M^\ell + \ep G^\ell \right] ~\ge~ \frac{3\ep\Lmax}{4\G} G^\ell.
\end{align*}

Thus, immediately after the $(\ell+1)$-st phase,
$\sum_{j=1}^n y_j^{\ell+1} \le \sum_{j=1}^n y_j^\ell - q\cdot \frac{3\ep\Lmax}{4\G} G^\ell = \left( 1 - \frac{3 \ep q\Lmax}{4\G} \right) G^\ell.$
If $q\ge \frac{4\G}{ 3\ep\Lmax}$, then
\begin{equation}\label{eq:G-grows-quick}
G^{\ell+1} = \Big|\sum_{j=1}^n y_j^{\ell+1}\Big| ~\ge~ \left( \frac{3\ep q\Lmax}{4\G} - 1\right) G^\ell.
\end{equation}

On the other hand, for each coordinate $k$, if it is chosen by $b_k$ processors in the $(\ell+1)$-st phase, then the value of $y_k^{\ell+1}$ is
\[
y_k^\ell - b_k \cdot \frac{\Lmax}{\G} \cdot \left[(1-\ep) y_k^\ell + \ep G^\ell\right].
\]
For each $q$ we consider, we apply~Corollary~\ref{cor:not-too-many}(a).
Note that $\tfrac 1{\Lambda} =\tfrac{\log n}{\log \frac nq}$.
Conditioned on $\calE_1$, $b_k \le \tfrac 1{\Lambda} (c+1)$.
Also, since $|y_k^\ell|\le M^\ell \le \ep  G^\ell/4$
and $\Lmax \le \G$,
\begin{align}
|y_k^{\ell+1}| &\le \frac{\ep}{4} \cdot G^{\ell} + \frac 1{\Lambda} 
(c + 1) \cdot \frac{\Lmax}{\G} \cdot \left( \frac{\ep }{4} \cdot G^{\ell} + \ep G^{\ell} \right)\nonumber\\
&~ = \left( \frac{1}{4} + \frac{5 \cdot  \frac 1{\Lambda} 
(c+1)  \Lmax }{4\G}\right)\ep G^{\ell}.\label{eq:M-grows-slow}
\end{align}

Given~\eqref{eq:G-grows-quick} and \eqref{eq:M-grows-slow}, to guarantee that $G^{\ell+1}\ge 2G^\ell$, having $\frac{3\ep q\Lmax }{4\G} - 1 \ge 2$ suffices.
We satisfy this by imposing $q \ge \frac{8\G}{\ep \Lmax}$, or equivalently $\frac{\ep q\Lmax }{4\G}\ge 2$, which is slightly stronger than needed, but will help improve the next constraint on $q$.
To guarantee that $M^{\ell+1} \le \ep  G^{\ell+1}/4$, the following condition suffices:
\begin{equation}\label{eq:M-less-than-G}
\frac{\ep}{4}\left[\frac{3\ep q\Lmax}{4\G} - 1\right] G^\ell \ge  \left( \frac{1}{4} + \frac{5 \cdot  \frac 1\Lambda 
(c+1)  \Lmax }{4\G}\right)\ep G^\ell.
\end{equation}

As $\frac{\ep q\Lmax }{4\G}\ge 2$, we see that $\tfrac{2\ep q \Lmax}{4\G} \ge \tfrac {5\cdot 
(c+1) \Lmax}{\Lambda\G}$ suffices; i.e.\
$ q \ge  \tfrac{10 (c+1)}{\ep}\tfrac{\log n}{\log \frac{n}{q}}$ suffices.
\end{proof}

\begin{proof}[Proof of Theorem~\ref{thm::high-prob-phase-lower-bound}]
This theorem follows from Lemma~\ref{lem::bounds-on-G}, which requires that  Event ${\cal E}_1$ hold.
So the following conditions suffice: 
\[
\ep \ge \frac 4n ~~\text{and}~~ \frac ne \ge q \ge \max\left\{ \frac {8\G}{\ep \Lmax}, \tfrac{10(c+1)}{\ep}\cdot\frac{\log n}{\log \frac{n}{q}}\right\}.
\]

If $\Lres\ge 2\Lmax$ (implying $\tfrac{1}{\ep} \le \tfrac{2\Lmax \sqrt n}{\sqrt 3 \Lres}$), the final condition becomes
\begin{align*}
q &\ge \max\left\{ \frac {16\G \sqrt{n}}{\sqrt 3 \Lres}, \frac{20(c+1)\sqrt{n}\Lmax}{\sqrt 3 \Lres} \cdot \frac{\log n}{\log \frac nq} \right\}.
\end{align*}

We focus on the second constraint, namely $q \geq \frac{20(c+1)\sqrt{n}\Lmax}{\sqrt 3 \Lres} \cdot \frac{\log n}{\log \frac nq}$.
Let $\mathcal{C} = \frac{20(c+1)\sqrt{n}\Lmax}{\sqrt 3 \Lres}$.  The constraint becomes $q \geq \mathcal{C}\frac{\log n}{\log \frac nq}$, or $q\log \frac nq \ge \calC \log n$, where the RHS is independent of $q$. Note that $q \log \frac nq$ is an increasing function for $1 \leq q \leq \frac n{e}$.
Therefore, if suffices to seek a $\hat{q}\ge 1$ such that $\hat{q}\log  (n/{\hat{q}}) \ge \calC \log n$; then $q\ge \hat{q}$ implies that the second constraint holds for $q \le n/e$.

Consider $\hat{q} = 10\calC$. Then $\hat{q}\log (n/{\hat{q}}) \ge \calC \log n$ is equivalent to the inequality $9\calC \log n \ge 10\calC \log (10\calC)$, and hence to $n^9\ge (10 \calC)^{10}$.
Substituting for $\calC$, we obtain
\[
n^9 \ge \left( \frac{200(c+1)}{\sqrt 3}\cdot \frac{\Lmax}{\Lres}\cdot \sqrt n \right)^{10},~\text{or equivalently, }n \ge \left( \frac{200(c+1)}{\sqrt 3}\cdot \frac{\Lmax}{\Lres} \right)^{5/2}.
\]
We also need $\hat{q} = 10\calC \le n/e$;
$n \ge \left( \frac{544(c+1)}{\sqrt 3} \cdot \frac{\Lmax}{\Lres} \right)^2$ suffices.
\end{proof}

\subsection{Proof of Theorem \ref{thm::all-times-lower-bound}}

\begin{lemma}
\label{lem::bound-on-M-improved}
Conditioned on Event $\calE_1'$ defined in~Corollary~\ref{cor:not-too-many}, if $\Lres \ge 2\Lmax$, $n \ge 400c^2 (c+2)^2$, and
\[
q \ge \max \left\{ \frac{8\G\sqrt n}{\sqrt 3 \Lres} + \frac{4c\G}{\Lmax} ~,~ 20c(c+2) \right\}
\]
then $M^{\ell}\le \frac{1}{4c}G^{\ell}$ and $G^{\ell} \ge 2^\ell G^0$, for $1\le \ell \le n^{c_1}$.
\end{lemma}
\begin{proof}
We follow the inductive argument in the proof of~Lemma~\ref{lem::bounds-on-G} closely. In the spirit of deriving~\eqref{eq:M-less-than-G}, we apply Corollary~\ref{cor:not-too-many}(b)
instead of Corollary~\ref{cor:not-too-many}(a), causing the $c+1$ term to be replaced by a $c+2$ term.
Then, it suffices that
\[
\frac{1}{4c}\left[\frac{3\ep q\Lmax}{4\G} - 1\right] G^\ell \ge  \left( \frac{1}{4} + \frac{5 \cdot  (c+2)  \Lmax }{4\G}\cdot \frac{\log n}{\log \frac{n}{q}}\right)\ep G^\ell.
\]
The above constraint is equivalent to $q \ge \frac{4\G}{3\ep \Lmax} + \frac{4\G c}{3\Lmax} + \frac{20c(c+2)}{3} \cdot \frac{\log n}{\log \frac nq}$,
so it suffices that $q \ge \max \left\{ \frac{4\G}{\Lmax} \left( \frac 1\ep + c \right) , 10c(c+2)\cdot \frac{\log n}{\log \frac nq} \right\}$.

We focus on the second constraint. As argued in the proof of~Theorem~\ref{thm::high-prob-phase-lower-bound}, it suffices to find a lower bound $\hat{q}$ on $q$, such that
$\hat{q} \log \frac n{\hat{q}} \ge \calC \log n$, where $\calC \triangleq 10c(c+2)$. $\hat{q} = 2\calC$ suffices if $n\ge (2\calC)^2$.

Since $q\ge \frac{4\G}{\ep \Lmax}$, following the derivation of~\eqref{eq:G-grows-quick} yields $G^{\ell+1} \ge 2G^{\ell}$.
\end{proof}

\begin{lemma}
\label{lem::Interval-I}
[Event ${\cal E}_2$]
Let ${\cal I} = [-\tfrac {1}{2n}G^\ell, \tfrac {1}{2n}G^\ell]$.
Suppose all the conditions imposed in Lemma~\ref{lem::bound-on-M-improved} hold.
Then, with probability at least $1 - 1/n^{c_2}$,
for $0 \le \ell < n^{c_1}$,
at every time during phase $\ell+1$,
at least one coordinate has a value outside $\cal I$.
\end{lemma}
\begin{proof}
We condition on Event $\calE_1'$, which by Corollary~\ref{cor:not-too-many},
occurs with probability at least $1- 1/2n^{c_2}$.

As before, without loss of generality, we assume that $\sum_i y^\ell_i > 0$.

We start by showing that at the start of the $(\ell+1)$-st phase at least $2c$ coordinates have values larger than $ \frac {1}{2n}G^\ell$.
Note that $\sum_{i:y^\ell_i\in {\cal I}} y^\ell_i \le \tfrac 12 G^\ell$. By~Lemma~\ref{lem::bound-on-M-improved},
$M^\ell \le \tfrac{1}{4c}G^\ell$. Thus, at the start of Phase
$\ell+1$, at least $\tfrac 12 G^\ell /\tfrac 1{4c} G^\ell = 2c$ coordinates have value greater than  $ \frac {1}{2n}G^\ell$, proving this claim.

Next, we observe that an update to a coordinate with value in $I$ changes
its value to be smaller than $-\frac {1}{2n}G^\ell$. For the largest value resulting from such
an update is $\left[1 -(1-\ep) \tfrac {\Lmax}{\G}\right] \tfrac  {1}{2n}G^\ell -
 \tfrac {\Lmax}{\G}  \ep G^\ell$.
By assumption, $\tfrac{\G}{\Lmax} \le \tfrac 14 \ep q \le \tfrac 14 \ep n$;
therefore $\tfrac{\ep \Lmax}{\G} \ge \tfrac 4n$.
We deduce the update value is at most
 $-\left( \tfrac 4n - \tfrac{1}{2n}\right) G^\ell$,
 which demonstrates the claim.

In order for all the coordinates to end up in $\cal I$ during Phase $\ell+1$,
we need that there be no coordinates less than $-\tfrac {1}{2n}G^\ell$ initially,
and that there be no updates to the coordinates in $\cal I$ until all the
other coordinates enter $\cal I$, assuming this is possible.
This requires some $k \ge 2c$ updates to these at least $2c$ coordinates.

The probability that these updates happen first is at most
\[
\frac{k!}{n^k} \le \frac{(2c)!}{n^{2c}} \le \left(\frac{2c}{n}\right)^{2c},
\]
and this is bounded by $1/(2n)^c$ if $n \ge 8c^2$.
Summed over all $n^{c_1}$ phases, this gives a total failure
probability of (significantly) less than $1/2n^{c_2}$.

Taking into account the $1/2n^{c_2}$ probability that Event $\calE_1'$ does not occur,
we see that the overall probability of Event ${\cal E}_2$ is at least $1 - 1/n^{c_2}$, as claimed.
\end{proof}

\begin{proof} [Proof of Theorem~\ref{thm::all-times-lower-bound}]
Conditioned on Event ${\cal E}_2$,
throughout phase $\ell+1$, at least one coordinate has
a value outside the interval $\cal I$ defined in Lemma~\ref{lem::Interval-I}.
Thus $f(\bbx) \ge \left[\tfrac {1}{2n}G^\ell\right]^2
\ge \tfrac{1}{4n^2}\cdot 4^\ell \left(G^0\right)^2
\ge \frac 14\cdot 4^\ell$.

Note that $f(\bbx^0) = \frac{1 - \ep}{2}\cdot n + \frac{\ep}{2} n^2 \le \ep n^2$ as, by assumption, $\ep > \frac 1n$.
We deduce that $f(\bbx) \ge  \frac {1}{4n^2}\cdot 4^\ell \cdot f(\bbx^0)$ throughout this phase.
\end{proof}

\section{Discussion}

We have shown a tight asymptotic upper bound on the possible parallelism
for achieving linear speedup when using asynchronous coordinate descent for almost the whole range of $\frac{\Lresbar}{\Lmax}$.
This upper bound holds even for composite functions.
Furthermore, it holds even in the somewhat restrictive consistent read model,
and thus it holds for the inconsistent read model too.


\subsection*{Acknowledgments}
We thank the referees 
for their thoughtful and incisive comments. 

 \bibliographystyle{plain}
 \bibliography{acd_references}

 \appendix

  \section{Missing Proofs}\label{app:missing}

\begin{lemma}
\label{lem::str-conv-bound-for-f}
The strong convexity parameter of $f_\ep$ is $(1-\ep)$.
\end{lemma}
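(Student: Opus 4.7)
The plan is to identify the strong convexity parameter directly from the Hessian of $f_\ep$, since for a convex quadratic function the strong convexity inequality in Definition~\ref{def:strong-convexity} holds with equality up to the quadratic form determined by the Hessian.

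First, I would differentiate $\nabla_j f_\ep(\bbx) = (1-\ep)x_j + \ep\sum_i x_i$ from~\eqref{eq:gradf} once more, obtaining the Hessian $H$ with diagonal entries $H_{jj} = 1$ and off-diagonal entries $H_{jk} = \ep$. Equivalently, $H = (1-\ep)\bbI + \ep\cdot \mathbf{1}\mathbf{1}\trans$, where $\mathbf{1}$ is the all-ones vector.

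Next, since $f_\ep$ is a quadratic function, Taylor's expansion around any point $\bbx\in \rr^n$ is exact, so for every $\bby\in \rr^n$,
\[
f_\ep(\bby) - f_\ep(\bbx) - \inner{\nabla f_\ep(\bbx)}{\bby-\bbx} ~=~ \tfrac{1}{2}(\bby-\bbx)\trans H (\bby-\bbx).
\]
Finally, because $\mathbf{1}\mathbf{1}\trans$ is positive semi-definite, $\bbv\trans H \bbv \ge (1-\ep)\|\bbv\|^2$ for every $\bbv$, and substituting $\bbv = \bby-\bbx$ into the displayed identity yields the strong convexity inequality with the stated parameter.

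There is no real obstacle here: the only subtlety is checking that the eigenspace realizing the smallest eigenvalue of $H$ is the one orthogonal to $\mathbf{1}$ (where the rank-one perturbation $\ep\mathbf{1}\mathbf{1}\trans$ vanishes), and this is handled transparently by the positive semi-definiteness argument above without having to compute eigenvalues explicitly. One should also sanity-check consistency with the earlier remark following Definition~\ref{def:strong-convexity}, which quotes the parameter as $(1-\ep)$; the factor of $\frac12$ in the lemma statement simply reflects absorbing the $\tfrac12$ from Definition~\ref{def:strong-convexity} into the constant.
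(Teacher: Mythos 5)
Your proof is correct and is essentially the paper's own argument in a cleaner form: the paper expands the Bregman divergence $f_\ep(\bby) - f_\ep(\bbx) - \inner{\nabla f_\ep(\bbx)}{\bby-\bbx}$ by hand and finds it equals $\frac{1-\ep}{2}\|\bby-\bbx\|^2 + \frac{\ep}{2}\big(\mathbf{1}\trans(\bby-\bbx)\big)^2$ before dropping the nonnegative second term, which is exactly your $\tfrac12(\bby-\bbx)\trans H(\bby-\bbx)$ with $H = (1-\ep)\bbI + \ep\,\mathbf{1}\mathbf{1}\trans$. You are also right on the normalization point: the computation gives $\mu = 1-\ep$ in the sense of Definition~\ref{def:strong-convexity} (which already carries a $\tfrac12$ in front of $\mu$), consistent with the paper's remark after that definition and before Theorem~\ref{thm::lower-bound-result}; the lemma's stated $(1-\ep)/2$ is the coefficient of $\|\bby-\bbx\|^2$ in the quadratic lower bound rather than $\mu$ itself.
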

\begin{proof}
\begin{align*}
&f_\ep(y) - f_\ep(x)   - \inner{\nabla f_\ep(x)} {y-x}\\
&\hspace*{0.6in} \ge \frac {1 - \ep} {2} \sum_i y_i^2 - x_i^2 + \frac {\ep} 2 \bigg[\big(\sum_i y_i\big)^2 - \big(\sum_i x_i\big)^2\bigg] \\
&\hspace*{1.0in}- (1 - \ep) \sum_i (x_i y_i - x_i^2) - \ep\bigg[\big(\sum_i x_i\big)\big(\sum_i y_i -\sum_i x_i\big)\bigg]\\
&\hspace*{0.6in}\ge \frac{1 - \ep}{2} \sum_i (y_i^2 + x_i^2 - 2x_i y_i)
+ \frac {\ep}{2} \bigg[ \big(\sum_i y_i\big)^2 + \big(\sum_i x_i\big)^2 - 2 \sum_i x_i \sum_i y_i\bigg] \\
&\hspace*{0.6in}\ge \frac{1 - \ep}{2}  \sum_i \big(y_i - x_i\big)^2 + \frac {\ep}{2}\bigg[\sum_i y_i - \sum_i x_i\bigg]^2 \\
&\hspace*{0.6in}\ge \frac{1 - \ep}{2} ||y-x||^2.
\end{align*}
\end{proof}

\paragraph{Proof of~Theorem~\ref{thm::lower-bound-result}}
Recall that $f_\ep(\bbx^*) = 0$. Also, recall that the SCD process starts at the all ones point.
Let $C_1$ denotes the even index coordinates, and $C_{-1}$ those of odd index.
Consider the following random variables:
\[
S_1(t) ~:=~ \sum_{j\in C_1} x_j^t~~~~~~S_{-1}(t) ~:=~ \sum_{j\in C_{-1}} (-x_j^t)~~~~~~S(t) ~:=~ S_1(t) + S_{-1}(t).
\]
Note that $S_1(0) = S_{-1}(0) = n/2$ and $S(0) = n$.

Next, we derive a recurrence which, conditioned on $S(t)$, computes the expected value of $S(t+1) - S(t)$.
Recall that if coordinate $j$ is chosen to be updated at time $t+1$, then
\[
x_j^{t+1} - x_j^t ~=~ - \frac{\nabla_j f(\bbx^t)}{\G} ~=~ -\frac{1-\ep}{\G} \cdot x_j^t - \frac{\ep}{\G} \cdot \left[ S_1(t) - S_{-1}(t) \right].
\]
Thus,
\begin{align*}
\expect{S(t+1) - S(t)~\Big|~S(t)}
&~=~ \frac 1n \left[ \sum_{j\in C_1} \left( -\frac{1-\ep}{\G} \cdot x_j^t - \frac{\ep}{\G} \cdot \left[ S_1(t) - S_{-1}(t) \right] \right) \right.\\
&\hspace*{0.5in}\left.~+~ \sum_{j\in C_{-1}} \left( \frac{1-\ep}{\G} \cdot x_j^t + \frac{\ep}{\G} \cdot \left[ S_1(t) - S_{-1}(t) \right] \right) \right]\\
&~=~ \frac 1n \left( -\frac{1-\ep}{\G} \cdot S_1(t) - \frac{1-\ep}{\G} \cdot S_{-1}(t) \right)\\
&~=~ -\frac{1-\ep}{n\G} S(t).
\end{align*}
The second equality above holds because $|C_1| = |C_{-1}|$, leading to cancellation of the terms $\pm \frac{\ep}{\G} \cdot \left[ S_1(t) - S_{-1}(t) \right]$.
Thus, $\expect{S(t+1)~|~S(t)} = S(t)\cdot \left( 1- \frac{1-\ep}{n\G} \right)$. Iterating this recurrence yields
\begin{equation}\label{eq:expect-St}
\expect{S(t)} = n\cdot \left( 1- \frac{1-\ep}{n\G} \right)^t.
\end{equation}

Next, observe that for any fixed $S_1(t),S_{-1}(t)$, by the Power-Mean Inequality,
$\sum_{j\in C_1} (x_j^t)^2 \ge \frac{2|S_1(t)|^2}{n}$ and $\sum_{j\in C_{-1}} (x_j^t)^2 \ge \frac{2|S_{-1}(t)|^2}{n}$.
On the other hand, for any fixed $S(t)$, which equals \st{to} $S_1(t) + S_{-1}(t)$, the sum $\frac{|S_1(t)|^2}{n} + \frac{|S_{-1}(t)|^2}{n}$
is minimized when $S_1(t) = S_{-1}(t) = S(t)/2$. Thus,
\begin{align*}
\expect{f_\ep(\bbx^t)} ~\ge~ \expect{\frac{1-\ep}2\sum_{j=1}^n (x_j^t)^2} &~\ge~ (1-\ep)\cdot \expect{\frac{|S_1(t)|^2}{n} + \frac{|S_{-1}(t)|^2}{n}} \\
&~\ge~ \frac{1-\ep}{2n}\cdot \expect{S(t)^2}.
\end{align*}
Finally, we complete the proof by using the Cauchy-Schwarz inequality and \eqref{eq:expect-St}:
\begin{align*}
\frac{1-\ep}{2n}\cdot \expect{S(t)^2}~\ge~ \frac{1-\ep}{2n}\cdot  \expect{S(t)}^2
&~=~ \frac{(1-\ep)n}{2}\cdot \left( 1- \frac{1-\ep}{n\G} \right)^{2t}\\
&~\ge~ \frac{(1-\ep)n}{2}\cdot \left( 1- \frac{2(1-\ep)}{n\G} \right)^{t} \\
&~=~ f_\ep(\bbx^0) \cdot \left( 1- \frac{2(1-\ep)}{n\G} \right)^t.
\end{align*}

\end{document}